\documentclass[a4paper,11pt,twoside]{amsart}

\topmargin=+3pt

\headsep=18pt

\textwidth=467pt \textheight=655pt

\oddsidemargin=-6pt \evensidemargin=-6pt





\usepackage{amssymb}

\usepackage[english,francais]{babel}

\input xy
\usepackage[all]{xy}

\newtheorem{theorem}{Theorem}[section]
\newtheorem{lemma}[theorem]{Lemma}
\newtheorem{e-proposition}[theorem]{Proposition}
\newtheorem{corollary}[theorem]{Corollary}
\newtheorem{e-definition}[theorem]{Definition\rm}

\newtheorem{theoreme}{Th\'eor\`eme}[section]

\newtheorem{proposition}[theoreme]{Proposition}

\setcounter{equation}{0}

\def\og{\leavevmode\raise.3ex\hbox{$\scriptscriptstyle\langle\!\langle$~}}
\def\fg{\leavevmode\raise.3ex\hbox{~$\!\scriptscriptstyle\,\rangle\!\rangle$}}

\newcommand{\CC}{\mathbb{C}}
\newcommand{\PP}{\mathbb{P}}
\newcommand{\QQ}{\mathbb{Q}}
\newcommand{\ko}{{\mathcal O}}
\newcommand{\km}{{\mathcal M}}
\newcommand{\ku}{{\mathcal U}}
\newcommand{\kv}{{\mathcal V}}
\newcommand{\kw}{{\mathcal W}}
\newcommand{\Hilb}{{\mathrm{Hilb}}}

\begin{document}




%

\selectlanguage{english}
\title{Stable pairs on elliptic K3 surfaces}



\author[Marcello Bernardara]{Marcello Bernardara}
\email{marcello.bernardara@uni-due.de}

\address[Marcello Bernardara]{Universit\"{a}t Duisburg-Essen, Fakult\"{a}t f\"{u}r Mathematik, Universit\"{a}tsstr. 2,
45117 Essen (Germany).}

\selectlanguage{english}

\begin{abstract}
We study semistable pairs on elliptic K3 surfaces with a section: we construct a family
of moduli spaces of pairs, related by wall crossing phenomena, which can be studied to describe the birational correspondence
between moduli spaces of sheaves of rank 2 and Hilbert schemes on the surface.
In the 4-dimensional case, this can be used to get the isomorphism between the moduli space and the Hilbert scheme
described by Friedman. 

\medskip
\noindent
R{\tiny \'ESUM\'E}. On consid\`ere sur une surface K3 elliptique avec une section une notion de stabilit\'e pour un couple.
On obtient une famille d'espaces de modules reli\'es par wall crossing, dont l'\'etude permet de
d\'ecrire les correspondances birationnelles entre les espaces de modules des faisceaux stables de rang 2
et les sch\'emas de Hilbert sur la surface. En particulier, en dimension 4, ceci permet de d\'ecrire
l'isomorphisme entre l'espace des module et le sch\'ema de Hilbert demontr\'e par Friedman.

\end{abstract}

\maketitle

\selectlanguage{francais}
\section*{Version abr\'eg\'ee en fran\c{c}ais}
Soit $\pi:S \to \PP^1$ une surface elliptique K3 lisse sur $\CC$ dont toute fibre singuli\`ere
est au plus nodale. Si on consid\`ere sur $S$ un faisceau $V$ de rang 2 et de classes de Chern
$c_1(V)$ et $c_2(V)$, il existe une polarisation, dite $(c_1(V),c_2(V))$-convenable,
pour laquelle $V$ est stable si et seulement si sa restriction \`a la fibre g\'en\'erique de
$\pi$ est stable. Si on demande aussi que le degr\'e de $V$ sur la fibre soit impair,
Friedman montre que l'espace des modules des fibr\'es avec telles classes de Chern
semistable par rapport \`a une polarisation convenable est, lorsque non vide, une vari\'et\'e
lisse projective de dimension paire $2t$ birationnelle \`a ${\mathrm{Sym}}^tJ^{e+1}(S)$, o\`u
$2e+1$ est le degr\'e sur la fibre et $J^d(S)$ d\'enote la surface elliptique
dont la fibre g\'en\'erale est isomorphe \'a l'espace des fibr\'es en droites de
degr\'e $d$ sur la fibre g\'en\'erale de $S$.

Si $\pi: S \to \PP^1$ admet une section $\sigma$ et on d\'enote par $f$ la fibre de $\pi$, on
peut, sans perte de g\'en\'eralit\'e, \'etudier les cas o\`u $c_2=1$ et $c_2=\sigma-tf$ pour $t$ entier
positif. On d\'enotera dans la suite par $M(t)$ l'espace des modules des tels fib\'es stables.
Friedman obtient donc une application birationnelle $\varepsilon$ entre $\Hilb^t(S)$ et $M(t)$, qui
est un isomorphisme pour $t \leq 2$, et conjecture que ceci soit vrai pour tout $t$ (\cite[Conj. III, 4.13]{friedellip}).

Pour \'etudier dans le d\'etail la correspondance birationnelle $\varepsilon$, on \'etudie dans ce papier
des espaces de modules de couples stables sur $S$. Huybrechts et Lehn \cite{huylehnart,huylehnart2} d\'efinissent
une notion de stabilit\'e pour un couple $(V,\alpha)$ o\`u $V$ est un faisceau coh\'erent sur une vari\'et\'e lisse
projective et $\alpha:V \to E_0$ un morphisme vers un faisceau $E_0$ fix\'e. Cette notion d\'epend d'un polyn\^ome
\`a coefficients rationnels et, pour un choix convenable (d'ailleurs, g\'en\'erique), il existe un espace
des modules fin projectif des couples stables.
 
Nous d\'efinissons une condition de stabilit\'e, d\'ependant d'un param\`etre
rationnel positif $\delta$, pour un
couple $(V,\alpha)$, o\`u $V$ est un faisceau de rang 2, $c_1(V)=\sigma - tf$
et $c_2(V) = 1$ et $\alpha: V \to \ko_S(\sigma - f)$. Pour $\delta > t+1/2$
cette condition devient trop stricte et il n'y a donc pas de couple stable.
Pour tout entier $n$, la condition de stabilit\'e ne change pas si $\delta$ est compris
entre ${\mathrm{max}}\{0,n-1/2\}$ et $n+1/2$ et, dans ce cas, tout couple semistable est stable.
On peut donc se ramener \`a l'\'etude d'une famille finie d'espaces de modules
$\km_n$ projectifs pour $n$ entier compris entre $0$ et $t$.

La premi\`ere propri\'et\'e qu'on observe est que le premier espace $\km_0$
de la famille admet une fibration en espaces projectifs au dessus
de l'espace des modules $M(t)$. En effet, un couple $(V,\alpha)$ est $0$-stable
si et seulement si le faisceau $V$ est stable. 
Pour tout $V$ stable la fibre est donc donn\'ee par l'espace
projectif $\PP{\mathrm{Hom}}(V,\ko_S(\sigma-f))$. Dans le cas en question,
un tel espace n'est jamais vide et est r\'eduit \`a un point pour un $V$ g\'en\'erique.
On a donc un morphisme birationnel $\km_0 \to M(t)$.
Comme on peut d\'ecrire toujours un tel $V$ comme extension, il est
facile de v\'erifier que le couple g\'en\'erique $(V,\alpha)$ dans $\km_0$ est
$n$-stable pour tout $n=1,\ldots,t$ et de determiner les couples $(V,\alpha)$ qui sont $0$-stables mais
pas $n$-stables pour $n\geq 1$.  

De l'autre c\^ot\'e, il existe un sous-sch\'ema ferm\'e $\tilde{\km}_t$ dans
$\km_t$ qui admet une fibration en espaces projectifs au dessus du
sch\'ema de Hilbert $\Hilb^t(S)$. En fait lorsque on consid\`ere un couple
$(V, \alpha)$ dans $\km_t$ tel que le noyau de $\alpha$ est localement
libre, on peut d\'ecrire $V$ comme extension
de $\ko_S(\sigma-f)\otimes I_Z$ par $\ko_S((1-t)f)$, o\`u $Z$ est
le conoyau de $\alpha$, et donc un sous-sch\'ema localement intersection
compl\`ete dans $S$ de codimension 2 et longueur $t$. Dans le cas en
question, pour un $Z$ g\'en\'erique, une telle extension existe et est unique,
ce qui donne donc un morphisme birationnel $\tilde{\km}_t \to
\Hilb^t(S)$. Le couple g\'en\'erique $(V,\alpha)$ de $\tilde{\km}_t$
est $0$-stable et on peut d\'ecrire dans $\tilde{\km}_t$ les lieux des couples $(V,\alpha)$
non $n$-stables pour $n\leq t$ par l'\'etude du fibr\'e d\'estabilisant maximal du faisceau $V$. 

On dispose finalement d'une suite d'espaces de modules $\km_n$ pour
$n$ entier compris entre 0 et $t$ et tels que le premier et le dernier
des espaces de la suite admettent un morphisme birationnel respectivement
sur l'espace des modules des fibr\'es stables de rang deux et sur le
sch\'ema de Hilbert. La description des couples $0$-stables et $t$-stables comme
extensions permet de d\'ecrire les lieux d'ind\'etermination de l'application
birationnelle $\varepsilon$ \`a travers les correspondances birationnelles
entre les espaces de modules des couples induites par les wall crossing.

Lorsque on fixe $t=2$, ces transformations birationnelles
peuvent \^etre d\'ecrites
explicitement en appliquant des tranformations \'el\'ementaires au couple
universel $(\kv,A)$ de l'espace des modules $\tilde{\km}_2$, qui, dans ce cas, est lisse
et projectif. Ceci nous
permet de montrer l'existence d'un morphisme injectif $\tilde{\km}_2 \hookrightarrow \km_1$ et
d'un morphisme birationnel $\phi_0:\tilde{\km}_2 \to \km_0$.
On obtient comme corollaire le r\'esultat suivant (\cite[Thm. 4.9]{friedellip}).
\begin{theoreme}[Corollary \ref{isoiscorollary}]
Le morphisme $\phi_0$ induit un isomorphisme $\Hilb^2(S) \stackrel{\simeq}{\to} M(2)$.
\end{theoreme}

Le but de cette note est donc de donner un nouveau regard sur la correspondance
birationnelle $\varepsilon$, en explicitant dans
le cas $t=2$
comment retracer dans ce langage l'isomorphisme d\'ej\`a connu.

\selectlanguage{english}

\section{Introduction}

Let $\pi: S \to \PP^1$ be a complex elliptic K3 surface whose singular fibres have at most nodal singularities. Given
a rank 2 torsion free sheaf $V$ with Chern classes $c_1(V)$ and $c_2(V)$, there exists a polarization, called
$(c_1(V),c_2(V))$-suitable, with respect
to which $V$ is stable if and only if its restriction to the generic fibre is stable. This allows Friedman
\cite{friedellip} to show that, if non empty,
the moduli space of such stable sheaves with odd fibre degree $2e+1$ is smooth, of even dimension $2t$
and birational to ${\mathrm{Sym}}^tJ^{e+1}(S)$, where $J^d(S)$ denotes the elliptic surface whose general fiber
is the set of line bundles of degree $d$ on the general fiber of $S$.

If $\pi: S \to \PP^1$ admits a section $\sigma$ and $f$ denotes the generic fibre, we can restrict to the cases $c_2=1$
and $c_1=\sigma-tf$ for a nonnegative integer $t$ and denote by $M(t)$ the moduli space
of rank 2 stable sheaves with such Chern classes.
Friedman's result gives in this case a birational map $\varepsilon$ between $\Hilb^t(S)$ and $M(t)$, which he shows to be
an isomorphism for $t \leq 2$. This leads to conjecture that this map is an isomorphism
for all $t$ (\cite[Conj. III, 4.13]{friedellip}).
 
In this paper, in order to understand closely the birational correspondence $\varepsilon$ between $\Hilb^t(S)$ and $M(t)$, we consider
stable pairs and their moduli spaces as defined and studied in
\cite{huylehnart,huylehnart2}. We give a definition of a $\delta$-stable pair depending on a rational parameter $\delta$,
which gives rise
to a finite family of moduli spaces related by wall crossing phenomena. The first and the last moduli spaces are
birational respectively to $M(t)$ and $\Hilb^t(S)$ and the wall crossing phenomena accurately describe the locus
of indeterminacy of $\varepsilon$. Indeed, if one consider an element $Z$ of $\Hilb^t(S)$, the Serre construction
defines an extension $V$ which is only generically stable, but for such an extension, any pair $(V,\alpha)$ is $t$-stable.
On the other side, any semistable sheaf $V$ in $M(t)$ can be described as an extension, depending on two codimension
2 subschemes of $S$. For such a $V$, any pair $(V,\alpha)$ is $0$-stable. The birational correspondences
between the moduli spaces can be then described basing upon the codimension 2 subschemes appearing in the the extensions.
In the case $t=2$, a detailed description of such correspondences, based on \cite{friedellip}, allows to
define a birational morphism from an irreducible component of the moduli spaces of $2$-stable pairs (which
turns out to be a blow up of $\Hilb^t(S)$) and the moduli space of $0$-stable pairs inducing an isomorphism
between $\Hilb^2(S)$ and $M(2)$.

\section{Stable pairs on elliptic K3 surfaces}\label{stablepairssection}

If $t$ is a nonnegative integer, we fix $L=\ko_S(\sigma+(t+5)f)$ as a $(\sigma-tf,1)$-suitable polarization.
We say that a sheaf is (semi)stable if it is $\mu$-(semi)stable. 

\begin{e-definition}\label{defstabpaironS}
Let $V$ be a rank 2 coherent sheaf over $S$ with $c_1(V) = \sigma - tf$ and $c_2(V)= 1$, $\alpha:V \to
\ko_S(\sigma -f)$ a morphism and $\delta \in \QQ_{>0}$. The pair $(V,\alpha)$ $\delta$-semistable if
\begin{enumerate}
\item[(i)] ${\mathrm{deg}}G \leq 3/2 - \delta$ for all nontrivial
submodules $G \subset {\mathrm{ker}}(\alpha)$,
\item[(ii)] ${\mathrm{deg}}G \leq 3/2 + \delta$ for all nontrivial submodules
$G \subset V$.
\end{enumerate}
Such a pair is $\delta$-stable if both inequalities hold strictly.
\end{e-definition}

This definition is just a special case of Definition 1.1 in \cite{huylehnart}. Then for any positive
$\delta$ the fine moduli
space of stable pairs with respect to $\delta$ exists ans is projective
\cite{huylehnart,huylehnart2}. We denote it by $\km_{\delta}$.

For any integer $n$, if $\delta$ varies in $({\mathrm{max}}\{0,n-1/2\}, n+1/2)$, the moduli spaces
$\km_{\delta}$ are all isomorphic and all semistable pairs are stable. By condition (i) 
there is no semistable pair with respect to $\delta > t+1/2$. There is then a family $\{\km_n\}_{0\leq n \leq t}$
of nonempty projective moduli spaces related by wall crossing
phenomena which give rise to birational maps.

A pair $(V,\alpha)$ is $0$-stable if and only if $V$ is stable. By \cite{friedellip},
any $V$ in $M(t)$ fits a sequence
\begin{equation}\label{extforV}
0 \longrightarrow \ko_S((1-s)f) \otimes I_{Z_1} \longrightarrow V \longrightarrow
\ko_S(\sigma + (1+s-t)f) \otimes I_{Z_2} \longrightarrow 0,
\end{equation}
where $0 \leq s \leq t$ and $l(Z_1)+l(Z_2)=s$. Then $V$
admits at least one nonzero map to $\ko_S(\sigma-f)$ and $\km _0$
fibres over $M(t)$ with fibres given by $\PP{\mathrm{Hom}}(V,\ko_S(\sigma-f))$. Since such fibre
is never empty and generically one dimensional, $\km_0 \to M(t)$ is birational.

Condition (i) gets stronger as $\delta$ grows. Pairs $(V,\alpha)$ in $\km_0$
which do not belong to $\km_n$ are given by extensions (\ref{extforV}) with $s\geq n$.

On the other side, let $\tilde{\km}_t \subset
\km_t$ be the subscheme whose elements are those pairs $(V,\alpha)$ with ${\mathrm{ker}}(\alpha)$ locally free.
In this case, $V$ is given by an extension
\begin{equation}\label{type1ext}
0 \longrightarrow \ko_S((1-t)f) \longrightarrow V \longrightarrow
\ko_S(\sigma - f) \otimes I_Z \longrightarrow 0,
\end{equation}
with $Z$ in $\Hilb^t(S)$. Moreover such an extension is unique for $Z$ generic, which,
together with the following proposition,
tells us that $\tilde{\km}_t$ is projective and birational to $\Hilb^t(S)$.
\begin{proposition}[\cite{huylehnart}, Corollary 2.14]
The set $\tilde{\km}_t$ is a projective scheme over $\Hilb^t(S)$ with fibre over $Z$ isomorphic to
$\PP{\mathrm{Ext}}^1(\ko_S(\sigma - f)\otimes I_Z, \ko_S((1-t)f))$.
\end{proposition}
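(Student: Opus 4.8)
This is Corollary~2.14 of \cite{huylehnart} applied to the present data, the fixed target being $E_0=\ko_S(\sigma-f)$ and the stability parameter $\delta=t$; the task is to recognise the ingredients of that general statement in our situation and to check its hypotheses. I would first pin down the pairs in $\tilde{\km}_t$ explicitly. If $(V,\alpha)$ is $t$-stable with $\ker(\alpha)$ locally free, then $\alpha\neq 0$, since a pair with $\alpha=0$ cannot be $\delta$-stable for $\delta>0$; hence the image of $\alpha$ is $\ko_S(\sigma-f-E)\otimes I_W$ for some effective divisor $E$ and finite subscheme $W$, and $\ker(\alpha)$ is a line bundle with first Chern class $E+(1-t)f$. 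Condition~(i) of Definition~\ref{defstabpaironS} bounds the normalized degree of this line bundle by $3/2-t$, i.e. $E\cdot L\le 1/2$, which by ampleness of $L$ forces $E=0$; thus $\ker(\alpha)=\ko_S((1-t)f)$ (on a K3 surface a line bundle is determined by its first Chern class), $V$ fits into the extension (\ref{type1ext}) with $Z:=W$, comparing second Chern classes gives $l(W)=t$, and $\mathrm{coker}(\alpha)=\ko_W$ defines a point of $\Hilb^t(S)$. Moreover the class of the extension (\ref{type1ext}) is nonzero, since otherwise $\ko_S(\sigma-f)\otimes I_W\subset V$, of degree $t+2>3/2+t$, would violate~(ii).

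Conversely, I would check that for every $Z\in\Hilb^t(S)$ and every nonzero $e\in\mathrm{Ext}^1(\ko_S(\sigma-f)\otimes I_Z,\ko_S((1-t)f))$, the corresponding extension $V$, together with the composite $\alpha\colon V\to\ko_S(\sigma-f)\otimes I_Z\hookrightarrow\ko_S(\sigma-f)$, is a $t$-stable pair with locally free kernel $\ko_S((1-t)f)$. Condition~(i) is clear, since rank-one subsheaves of $\ko_S((1-t)f)$ have normalized degree $\le 1-t<3/2-t$. For condition~(ii), $V$ is torsion free (being an extension of torsion-free sheaves), a rank-two subsheaf has normalized degree $\le 3/2$, and a rank-one subsheaf $G\subset V$ either lies in $\ker(\alpha)$ — degree $\le 1-t$ — or embeds into $\ko_S(\sigma-f)\otimes I_Z$; in the latter case the subsheaves of degree $t+2$ are exactly those of the form $\ko_S(\sigma-f)\otimes I_{Z'}$ with $Z\subseteq Z'$, and such a subsheaf does not lift to $V$ because $e$ remains nonzero under the restriction $\mathrm{Ext}^1(\ko_S(\sigma-f)\otimes I_Z,\ko_S((1-t)f))\to\mathrm{Ext}^1(\ko_S(\sigma-f)\otimes I_{Z'},\ko_S((1-t)f))$, which is injective because $\mathrm{Ext}^1(T,\ko_S((1-t)f))=0$ for the finite-length quotient $T=(\ko_S(\sigma-f)\otimes I_Z)/(\ko_S(\sigma-f)\otimes I_{Z'})$; hence $\deg G\le t+1<3/2+t$.

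Finally I would put everything in families over $B:=\Hilb^t(S)$. Writing $\mathcal{Z}\subset B\times S$ for the universal subscheme, $p\colon B\times S\to B$ for the projection, and $\mathcal{F}:=\ko_S(\sigma-f)\otimes I_{\mathcal{Z}}$, $\mathcal{G}:=\ko_S((1-t)f)$ for the evident $B$-flat families (pulled back to $B\times S$), the vanishing $\mathrm{Hom}(\ko_S(\sigma-f)\otimes I_Z,\ko_S((1-t)f))=H^0(S,\ko_S((2-t)f-\sigma))=0$ — a section there would restrict to zero on every fibre of $\pi$, hence vanish — shows that $\mathcal{E}xt^0_p(\mathcal{F},\mathcal{G})=0$. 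By \cite[\S2]{huylehnart} it then follows that $\mathcal{E}xt^1_p(\mathcal{F},\mathcal{G})$ commutes with base change, that $P:=\PP(\mathcal{E}xt^1_p(\mathcal{F},\mathcal{G}))$ is projective over $B$ with fibre over $Z$ equal to $\PP\mathrm{Ext}^1(\ko_S(\sigma-f)\otimes I_Z,\ko_S((1-t)f))$, and that $P$ carries a universal extension, hence a universal pair; by the two previous paragraphs this is a flat family of $t$-stable pairs with locally free kernel, so it induces a morphism $P\to\km_t$ landing in $\tilde{\km}_t$ and bijective on points. The step I expect to be the main obstacle — and which is really the content of Corollary~2.14 of \cite{huylehnart}, as opposed to the verifications above — is to upgrade this bijection to an isomorphism of schemes: one must show that the condition ``$\ker(\alpha)$ locally free'' defines precisely the subscheme structure on $\tilde{\km}_t$ over which $\mathrm{coker}(A)$ is a $B$-flat family of length-$t$ quotients of $\ko_S(\sigma-f)$, so that the classifying morphism $\tilde{\km}_t\to\Hilb^t(S)$ exists and inverts $P\to\tilde{\km}_t$, and that $P$ represents the associated relative moduli functor. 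The computations above serve only to verify the hypotheses under which the cited result yields this identification.
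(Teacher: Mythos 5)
Your proposal is correct and takes the same route as the paper, which offers no argument beyond the citation of \cite{huylehnart}, Corollary 2.14: both reduce the statement to that general representability result, and your supporting computations (with $L=\sigma+(t+5)f$ one gets $\mu(V)=3/2$, $\deg\ko_S((1-t)f)=1-t$, $\deg\ko_S(\sigma-f)=t+2$, so condition (i) forces $E=0$ and $l(W)=t$, and the nonvanishing of the extension class under the injective restriction map rules out the destabilizing $\ko_S(\sigma-f)\otimes I_{Z'}$) all check out. You also correctly isolate the only genuinely nontrivial point, namely that the set-theoretic bijection with $\PP\mathcal{E}xt^1_p(\mathcal F,\mathcal G)$ is an isomorphism of schemes, as being exactly the content of the cited corollary.
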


Condition (ii) gets stronger as $\delta$ decreases. Indeed, for $\delta=0$, this condition
implies that $V$ has no destabilizing subline bundles, while for $\delta \geq 1$, the sheaf $V$ can have
destabilizing subline bundles.
By \cite[III, Prop. 4.4]{friedellip}, the maximal destabilizing subline bundle is of the form $\ko_S(\sigma-af)$
for some integer $a$, then it is not contained in ${\mathrm{ker}}(\alpha)$.
Pairs $(V,\alpha)$ belonging to $\tilde{\km}_t$ but not to $\km_n$ are then unstable extensions
(\ref{type1ext}) such that the maximal destabilizing subline bundle of $V$ is $\ko_S(\sigma-af)$ with
$a > 1+t-n$.

\section{Stable pairs in the case $t=2$}

Consider pairs $(V,\alpha)$ with $c_1(V) = \ko_S(\sigma-2f)$. We show that the
scheme $\tilde{\km}_2$
is smooth and there is an injective morphism $\tilde{\km}_2 \hookrightarrow \km_1$, and that
there is a birational morphism $\tilde{\km}_2 \to \km_0$,
inducing an isomorphism $\Hilb^2(S) \simeq M(2)$.

If $(V,\alpha)$ is a pair in $\tilde{\km}_2$, then $V$ is given by
\begin{equation}\label{genericextension}
0 \longrightarrow \ko_S(-f) \longrightarrow V \longrightarrow
\ko_S(\sigma - f) \otimes I_Z \longrightarrow 0,
\end{equation}
for $Z$ in $\Hilb^2(S)$.
The
dimension of ${\mathrm{Ext}}^1(\ko_S(\sigma - f) \otimes I_Z, \ko_S(-f))$ is 2  
if $Z$ is in ${\mathrm{Sym}}^2\sigma$ and 1 otherwise (see\cite{friedellip}).
Consider the ideal sheaf $I:=I_{{\mathrm{Sym}}^2 \sigma}$.
It can be shown \cite{mathese} that the projectivization $\PP(I)$ is isomorphic
to $\tilde{\km}_2$. Indeed, up to a twist with a line bundle on $\Hilb^2(S)$, the sheaf $I$ is isomorphic to the
sheaf whose stalks are given by extensions (\ref{genericextension}).
\begin{lemma}[\cite{mathese}, Lemma 2.28]\label{m2tildeistheblowup}
The subscheme $\tilde{\km}_2$ is the blow-up of $\Hilb^2(S)$ along ${\mathrm{Sym}}^2 \sigma$.
\end{lemma}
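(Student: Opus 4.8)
The plan is to exhibit $\tilde{\km}_2$ as the scheme $\PP(I)$ for $I = I_{\mathrm{Sym}^2\sigma}$ and then to recognize this projectivization as the blow-up. First I would recall the setup: by the Proposition of the previous section, $\tilde{\km}_2$ is a projective scheme over $\Hilb^2(S)$ whose fibre over a point $Z$ is $\PP\mathrm{Ext}^1(\ko_S(\sigma-f)\otimes I_Z,\ko_S(-f))$. The key numerical input, quoted from \cite{friedellip}, is that this $\mathrm{Ext}^1$ has dimension $2$ exactly when $Z\in\mathrm{Sym}^2\sigma$ and dimension $1$ otherwise. Thus $\tilde{\km}_2\to\Hilb^2(S)$ is an isomorphism away from $\mathrm{Sym}^2\sigma$ and has $\PP^1$-fibres over $\mathrm{Sym}^2\sigma$, which is precisely the numerical shape of the blow-up of $\Hilb^2(S)$ along the (smooth, codimension-$2$) surface $\mathrm{Sym}^2\sigma$.

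To upgrade this numerical coincidence to an isomorphism of schemes, I would work with the sheaf $\kw$ on $\Hilb^2(S)$ whose formation commutes with base change and whose fibre at $Z$ is $\mathrm{Ext}^1(\ko_S(\sigma-f)\otimes I_Z,\ko_S(-f))$, obtained from the relative $\mathrm{Ext}$ sheaves of the universal ideal sheaf on $\Hilb^2(S)\times S$; then by construction $\tilde{\km}_2=\PP(\kw)$. The content of Lemma~\ref{m2tildeistheblowup} is the identification, up to a line-bundle twist on $\Hilb^2(S)$, of $\kw$ with the ideal sheaf $I=I_{\mathrm{Sym}^2\sigma}$. Granting this identification (the reference \cite{mathese} is cited for it), the lemma follows from the standard fact that for a regularly embedded smooth subvariety $Y\subset X$ of codimension $2$ one has $\mathrm{Bl}_Y X=\PP(I_Y)$: indeed $I_Y$ has projective dimension $1$, its second symmetric power maps onto $I_Y^2$ with the Rees algebra as image, and $\mathrm{Proj}$ of the Rees algebra is the blow-up, which for codimension $2$ complete intersections coincides with $\mathrm{Proj}\,\mathrm{Sym}^\bullet I_Y=\PP(I_Y)$. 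A twist of $I$ by a line bundle on $\Hilb^2(S)$ does not change the $\mathrm{Proj}$, so $\PP(\kw)\simeq\PP(I)\simeq\mathrm{Bl}_{\mathrm{Sym}^2\sigma}\Hilb^2(S)$.

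The main obstacle is the sheaf-theoretic identification $\kw\simeq I\otimes(\text{line bundle})$, rather than the final homological step. One has to check that $\kw$ is actually an ideal sheaf twist and not merely a rank-one sheaf with the right fibre dimensions: concretely, that the universal extension sheaf is torsion-free, that its double dual is a line bundle $N$ on $\Hilb^2(S)$, and that $\kw\otimes N^{-1}\hookrightarrow\ko_{\Hilb^2(S)}$ has cokernel supported with the correct scheme structure on $\mathrm{Sym}^2\sigma$ (reduced and of the expected codimension). This is where smoothness of $\Hilb^2(S)$ and of $\mathrm{Sym}^2\sigma$, together with the explicit description of the extensions in \eqref{genericextension} over the two strata, must be combined; it is carried out in \cite{mathese} and I would import it as stated. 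Once $\kw$ is pinned down as $I$ twisted by $N$, everything else is formal, and in particular one deduces that $\tilde{\km}_2$ is smooth, being a blow-up of the smooth variety $\Hilb^2(S)$ along a smooth centre.
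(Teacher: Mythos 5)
Your proposal follows essentially the same route as the paper: identify $\tilde{\km}_2$ with the projectivization of the relative extension sheaf, use the jump of $\dim\mathrm{Ext}^1(\ko_S(\sigma-f)\otimes I_Z,\ko_S(-f))$ from $1$ to $2$ along $\mathrm{Sym}^2\sigma$, quote from \cite{mathese} the identification of that sheaf with $I_{\mathrm{Sym}^2\sigma}$ up to a line-bundle twist, and conclude via $\PP(I_Y)\simeq\mathrm{Bl}_YX$ for a smooth codimension-two centre. The paper itself only sketches this and defers the sheaf-theoretic identification to \cite{mathese}, exactly as you do, so your argument is correct and matches the intended proof.
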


Let $D_{\sigma}$ be the effective divisor of $\Hilb^2(S)$ which is the closure of the locus
of pairs $\{ p, q \vert p \in \sigma \}$. Let $D$ be the irreducible smooth divisor in $\Hilb^2(S)$ given by
$$D=\{ Z \in \Hilb^2(S) \ \vert \ h^0(\ko_S(f) \otimes I_Z) = 1 \}.$$
An argument for the smoothness of $D$ can be found in \cite{friedellip}.
We denote by $\tilde{D}_{\sigma}$ (resp. $\tilde{D}$) the strict transform
of $D_{\sigma}$ (resp. of $D$) and by $\tilde{G}$ the exceptional divisor of the blow-up.

Studying destabilizing subline bundles for the extension (\ref{genericextension}) as $Z$ varies
in $\Hilb^2(S)$ 
allows us to say whether $V$ appears in a pair belonging to
$\km_n$ for $n < 2$. 
\begin{lemma}[\cite{mathese}, Lemma 2.29]\label{destabilizingsublineandpairs}
Let $(V,\alpha)$ be a pair in $\tilde{\km}_2$.  If
$(V,\alpha)$ belongs to $\tilde{D} \cup \tilde{D}_{\sigma}$, then it is $n$-stable
if and only if  $n = 2$. If $(V,\alpha)$ belongs to $\tilde{G} \setminus (\tilde{D} \cup \tilde{D}_{\sigma})$,
then it is $n$-stable if and only if $n=1,2$. In any other case, $(V,\alpha)$ is $n$-stable for
$n=0,1,2$.
\end{lemma}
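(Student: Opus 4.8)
The strategy is to translate the $n$-stability of a pair $(V,\alpha)$ in $\tilde{\km}_2$ into a statement about the maximal destabilizing subline bundle of $V$, and then to read off that subline bundle from the extension \eqref{genericextension} according to the position of $Z$ in $\Hilb^2(S)$. Recall from Section \ref{stablepairssection} that, since $\delta=2-\epsilon$ places us at the far end of the chamber structure, every pair in $\tilde{\km}_2$ is automatically $2$-stable; the content of the lemma is therefore to decide, for each stratum, whether $(V,\alpha)$ is additionally $1$-stable and/or $0$-stable. The key general fact I would invoke is the one already recorded before Lemma \ref{m2tildeistheblowup}: by \cite[III, Prop. 4.4]{friedellip} the maximal destabilizing subline bundle of such a $V$ has the form $\ko_S(\sigma-af)$, it is never contained in $\ker(\alpha)$, and a pair in $\tilde{\km}_2$ fails to lie in $\km_n$ precisely when $a>1+t-n=3-n$. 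Thus $(V,\alpha)$ is $1$-stable iff $a\le 1$ and $0$-stable iff $a\le 0$; condition (i) of Definition \ref{defstabpaironS} never obstructs these here because $\ker(\alpha)\cong\ko_S(-f)$ has degree $-1<3/2-\delta$ for all $\delta\le 2$, and the only submodules of $\ker(\alpha)$ are its subsheaves, all of degree $\le -1$.

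The combinatorial heart is then the computation of $a$ — equivalently, of $h^0(V(-\sigma+af))$ — in terms of $Z$. First I would observe that $V$ admits $\ko_S(\sigma-af)\hookrightarrow V$ iff, after composing with the surjection $V\twoheadrightarrow\ko_S(\sigma-f)\otimes I_Z$ in \eqref{genericextension}, either the composite is zero (forcing $\ko_S(\sigma-af)\hookrightarrow\ko_S(-f)$, impossible for $a\ge 0$ since $\sigma$ is effective and irreducible) or it is a nonzero map $\ko_S(\sigma-af)\to\ko_S(\sigma-f)\otimes I_Z$, i.e. a nonzero section of $\ko_S((a-1)f)\otimes I_Z$. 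Hence the existence of a destabilizing $\ko_S(\sigma-af)$ with $a\ge 1$ is governed by whether $Z$ imposes independent conditions on $|({a-1})f|$. For $a=1$ this is $h^0(I_Z)\ne 0$, which never happens, so $a\ge 2$ is needed and one must check whether the map $\ko_S(\sigma-2f)\to V$ lifting a section of $\ko_S(f)\otimes I_Z$ actually exists — this is where the Ext-dimension dichotomy ($\dim\operatorname{Ext}^1=2$ on $\mathrm{Sym}^2\sigma$, $=1$ elsewhere, cited from \cite{friedellip}) and the definition of $D$ as $\{h^0(\ko_S(f)\otimes I_Z)=1\}$ enter. Concretely: on $\tilde D$ the section of $\ko_S(f)\otimes I_Z$ exists, on $\tilde D_\sigma$ the point of $Z$ lying on $\sigma$ produces a destabilizing $\ko_S(\sigma-2f)$ (one sees $a=2$, i.e. $a>3-n$ exactly for $n\le 1$, so only $n=2$ survives), and on the exceptional divisor $\tilde G$ one gets $a$ just large enough to kill $0$-stability but not $1$-stability, whereas on $\tilde G\setminus(\tilde D\cup\tilde D_\sigma)$ the lift of the relevant section is obstructed by the $2$-dimensional Ext group, leaving $a\le 1$ and hence $1$-stability; off all three divisors $a\le 0$ and the pair is $0$-stable. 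Each of these is a short cohomological check once the extension \eqref{genericextension} and the classification of $\operatorname{Ext}^1$ are in hand, and many of them are already implicit in \cite{friedellip} and \cite{mathese}.

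I would organize the write-up as a case analysis over the stratification $\tilde{\km}_2 = (\tilde D\cup\tilde D_\sigma)\ \sqcup\ \big(\tilde G\setminus(\tilde D\cup\tilde D_\sigma)\big)\ \sqcup\ \big(\tilde{\km}_2\setminus(\tilde G\cup\tilde D\cup\tilde D_\sigma)\big)$, in each stratum exhibiting (or ruling out) the destabilizing $\ko_S(\sigma-af)$ and then applying the criterion "$(V,\alpha)\in\km_n$ iff $a\le 3-n$" together with automatic $2$-stability. The main obstacle I anticipate is the exceptional-divisor stratum $\tilde G$: there one must be careful that the blow-up parameter (a point in $\PP\operatorname{Ext}^1$) genuinely controls whether the would-be destabilizing sub-bundle lifts, and distinguish the generic point of a fibre of $\tilde G\to\mathrm{Sym}^2\sigma$ from the points where $\tilde G$ meets $\tilde D$ or $\tilde D_\sigma$ — this is exactly the subtlety that forces the two-part statement about $\tilde G$. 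Lemma \ref{m2tildeistheblowup} (identifying $\tilde{\km}_2$ with $\mathrm{Bl}_{\mathrm{Sym}^2\sigma}\Hilb^2(S)$ and $\tilde G$ with its exceptional divisor) is what makes this bookkeeping tractable, so I would lean on it throughout rather than re-deriving the geometry of $\tilde{\km}_2$ by hand.
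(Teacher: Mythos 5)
Your overall strategy is the right one and is the one the paper sets up at the end of Section \ref{stablepairssection}: condition (i) is harmless on $\tilde{\km}_2$ because $\ker(\alpha)\cong\ko_S(-f)$ has degree $-1$, $2$-stability is automatic, and everything reduces to locating the maximal destabilizing subline bundle $\ko_S(\sigma-af)\subset V$ stratum by stratum. The problem is that the quantitative dictionary on which your whole case analysis rests is stated in three mutually incompatible ways. With $L=\ko_S(\sigma+7f)$ one has $\deg_L\ko_S(\sigma-af)=5-a$, so a subline bundle $\ko_S(\sigma-af)$ violates condition (ii) in the chamber around $\delta=n$ exactly when $5-a\ge n+2$, i.e.\ when $a\le 3-n$: \emph{small} $a$ is the destabilizing direction. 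You first quote ``fails iff $a>3-n$'' (which is the reverse), then deduce ``$1$-stable iff $a\le 1$, $0$-stable iff $a\le 0$'' (which follows from neither version), and in the $\tilde D_\sigma$ case you assert ``$a=2$, i.e.\ $a>3-n$ exactly for $n\le 1$'' (false: $2>3-n$ iff $n\ge 2$). Your conclusions happen to coincide with the Lemma, but the logic as written does not produce them, and the version ``$0$-stable iff $a\le 0$'' would make the generic pair (which has no subline bundle $\ko_S(\sigma-af)$ of small $a$ at all) fail $0$-stability, contradicting the statement. You must fix the inequality once, from the degree computation, before the case analysis can be checked.

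The second, more substantive gap is the stratum $\tilde D_\sigma$. A nonzero map $\ko_S(\sigma-2f)\to V$ composed with $V\twoheadrightarrow\ko_S(\sigma-f)\otimes I_Z$ from \eqref{genericextension} is a section of $\ko_S(f)\otimes I_Z$ (the factorization through $\ko_S(-f)$ being impossible), so the existence of a degree-$3$ subline bundle is governed by $h^0(\ko_S(f)\otimes I_Z)\neq 0$ --- which is precisely the condition defining $D$, not $D_\sigma$. Your one-line justification that ``the point of $Z$ lying on $\sigma$ produces a destabilizing $\ko_S(\sigma-2f)$'' therefore does not hold up: a point of $Z$ on the section does not give a section of $\ko_S(f)\otimes I_Z$, and you have not exhibited the submodule of degree $\ge 3$ that the Lemma implicitly asserts must exist over $\tilde D_\sigma$ (it is needed to kill $1$-stability there). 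This is exactly the delicate point of the lemma, and it requires a genuine argument (in the paper's later construction it is tied to the existence of a surjection $V\to\ko_S$ over $\tilde D_\sigma$, whose kernel has $c_1=\sigma-2f$). Similarly, on $\tilde G$ you assert rather than prove that the extension-class parameter forces a lift of the section of $\ko_S(2f)\otimes I_Z$ to a map $\ko_S(\sigma-3f)\to V$ of degree exactly $2$ (so that only $0$-stability fails); the needed computation is the vanishing or non-vanishing of the pullback of the extension class in $\operatorname{Ext}^1(\ko_S(\sigma-3f),\ko_S(-f))\cong H^1(\ko_S(-\sigma+2f))$, and it is the place where the two-dimensionality of $\operatorname{Ext}^1$ over $\mathrm{Sym}^2\sigma$ must actually be used rather than merely cited.
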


On the other side, a pair $(V,\alpha)$ belongs to $\km_0$ if and only if $V$ is stable and $\alpha$
is any morphism $V \to \ko_S(\sigma-f)$. The sheaf $V$ is then an extension (\ref{extforV})
with $l(Z_1)+l(Z_2) \leq 2$. There are four possible extension types (see \cite{friedellip}), which we call type $a$
if $l(Z_2) = 2$, type $b$ if $l(Z_2)=1$ and $l(Z_1)=0$, type $c$ if $l(Z_2)=l(Z_1) = 0$ and type $d$ if $l(Z_2)=0$ and
$l(Z_1)=1$.

The generic stable sheaf is given by a type $a$ (which is indeed of the form (\ref{genericextension})) extension. In this case,
there is a unique choice for $\alpha$ and the pair $(V,\alpha)$ belongs to $\tilde{\km}_2$. In particular, such
pairs form the open complementary of $\tilde{D} \cup \tilde{D}_{\sigma} \cup \tilde{G}$ in $\tilde{\km}_2$.
In the non generic
case, the dimension of ${\mathrm{Hom}}(V,\ko_S(\sigma-f))$ is greater then 1, but any morphism
$V \to \ko_S(\sigma-f)$ factors through the extension map \cite{mathese} and 
we can say for which $n$ extensions of type $b$, $c$ and $d$ are $n$-stable.

\begin{lemma}[\cite{mathese}, Lemmas 3.3, 3.4 and 3.5]
If $V$ is a stable type $c$ extension, then any pair $(V,\alpha)$ is $n$-stable if and only if $n=0$. Moreover,
such extensions form in $M(2)$ a subscheme isomorphic to ${\mathrm{Sym}}^2\sigma$.

If $V$ is a type $b$ extension, then any pair $(V,\alpha)$ is $n$-stable if and only if $n=0,1$.

If $V$ is a stable type $d$ extension, then any pair $(V,\alpha)$ is $n$-stable if and only if $n=0,1$. If $V$ is unstable,
then for any $n$ no pair $(V,\alpha)$ is $n$-stable.
\end{lemma}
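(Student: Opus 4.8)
The plan is to prove all three assertions by one two–step scheme dictated by Definition~\ref{defstabpaironS}: for a given $V$, condition~(i) of $\delta$-stability depends only on $\ker(\alpha)$, so I would first pin $\ker(\alpha)$ down once and for all; condition~(ii) depends only on $V$, so I would then read it off from the (in)stability of $V$ together with the structure of its maximal destabilizing subsheaf. \textbf{Step 1 (identifying $\ker\alpha$).} In each of the types $b$, $c$, $d$ the sheaf $V$ sits in an extension $0\to A\to V\xrightarrow{p} B\to 0$ with $A=\ko_S$, resp. $\ko_S(f)$, resp. $\ko_S\otimes I_{Z_1}$, and $B$ the complementary rank-one sheaf. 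Applying $\mathrm{Hom}(-,\ko_S(\sigma-f))$ to this sequence, $\mathrm{Hom}(A,\ko_S(\sigma-f))$ is $H^0(\ko_S(\sigma-f))$, resp. $H^0(\ko_S(\sigma-2f))$, resp. again $H^0(\ko_S(\sigma-f))$ (here $\mathcal{H}om(I_{Z_1},\ko_S(\sigma-f))=\ko_S(\sigma-f)$ since $Z_1$ is a length-one subscheme of the smooth surface $S$), and all three vanish because $(\sigma-f)\cdot\sigma<0$ and $(\sigma-2f)\cdot\sigma<0$. Hence $\mathrm{Hom}(V,\ko_S(\sigma-f))=\mathrm{Hom}(B,\ko_S(\sigma-f))\neq0$ (so pairs exist) and every nonzero $\alpha$ factors as $\alpha=\beta\circ p$ with $\beta\colon B\to\ko_S(\sigma-f)$ nonzero, hence injective; therefore $\ker(\alpha)=\ker(p)=A$, independently of $\alpha$ and of whether $V$ is stable. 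Since $\deg_L\ko_S=0$, $\deg_L\ko_S(f)=f\cdot L=1$, $\deg_L(\ko_S\otimes I_{Z_1})=0$ and $\mu_L(V)=3/2$, condition~(i) (namely $\deg_L G<3/2-\delta$ for all $0\neq G\subset\ker\alpha$) holds exactly for $\delta<3/2$ in types $b,d$ and exactly for $\delta<1/2$ in type $c$; with the normalisation of $\km_n$ this means a type-$c$ pair can be $n$-stable only for $n=0$, and a type-$b$ or type-$d$ pair only for $n=0,1$.

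\textbf{Step 2 (condition~(ii) for stable $V$).} If $V$ is $\mu$-stable, i.e. $V\in M(2)$, then every rank-one subsheaf $G\subset V$ has $\deg_L G<\mu_L(V)=3/2$, hence $\deg_L G\le 1<3/2+\delta$ for all $\delta>0$, so condition~(ii) is automatic. Combined with Step~1 and the elementary verification that condition~(i) is indeed met for the relevant $\delta$-ranges, this gives: a stable type-$c$ extension produces $0$-stable pairs and nothing more; a type-$b$ extension (these lie in $M(2)$, hence are stable) produces exactly the $0$- and $1$-stable pairs; and a stable type-$d$ extension produces exactly the $0$- and $1$-stable pairs. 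It remains to treat an unstable type-$d$ sheaf. \textbf{Step 3 (unstable type $d$).} Here restriction to a generic fibre (Friedman's suitability of $L$) forces the extension $0\to\ko_S\otimes I_{Z_1}\to V\to\ko_S(\sigma-2f)\to0$ to split over $S\setminus Z_1$, so $V$ is not reflexive at $Z_1$; one computes $\mathcal{E}xt^1(V,\ko_S)\cong\ko_{Z_1}$. By \cite[III, Prop.~4.4]{friedellip} the maximal destabilizing subsheaf of $V$ is $\ko_S(\sigma-af)$ up to a codimension-two twist, i.e. of the form $\ko_S(\sigma-af)\otimes I_W$; chasing $\mathrm{Hom}(\ko_S(\sigma-af)\otimes I_W,V)$ against the extension and matching Chern classes and the non-reflexive locus, the only possibility is $a=2$, $W=Z_1$, so the maximal destabilizing subsheaf is $\ko_S(\sigma-2f)\otimes I_{Z_1}$, of $L$-degree $(\sigma-2f)\cdot L=3$. (The untwisted $\ko_S(\sigma-2f)$ does \emph{not} embed in $V$; only its $I_{Z_1}$-twist does, which is why $\deg_L$ jumps to $3$ although $V$ is not locally free.) Then condition~(ii) forces $3<3/2+\delta$, hence $\delta>3/2$ and $n\ge2$, while Step~1 forces $n\le1$: no $n$ is admissible, so no pair $(V,\alpha)$ is $n$-stable.

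\textbf{Step 4 (the type-$c$ locus).} For a type-$c$ extension the sub- and quotient sheaves $\ko_S(f)$ and $\ko_S(\sigma-3f)$ carry no subschemes, and $\mathrm{Hom}(\ko_S(f),\ko_S(\sigma-3f))=H^0(\ko_S(\sigma-4f))=0$, so the isomorphism classes of the resulting $V$ are parametrised by $\PP\,\mathrm{Ext}^1(\ko_S(\sigma-3f),\ko_S(f))=\PP H^1(S,\ko_S(4f-\sigma))$, a $\PP^2$ by Riemann--Roch (both $h^0$ and $h^2$ vanish). To identify this $\PP^2$ with $\mathrm{Sym}^2\sigma\subset M(2)$ I would invoke Lemma~\ref{m2tildeistheblowup}: $\mathrm{Sym}^2\sigma\subset\Hilb^2(S)$ is precisely the jumping locus of $\mathrm{Ext}^1(\ko_S(\sigma-f)\otimes I_Z,\ko_S(-f))$, and for $Z\in\mathrm{Sym}^2\sigma$ the sheaves obtained from~(\ref{genericextension}) are exactly the stable type-$c$ extensions; one then checks, using Friedman's explicit description of that jumping locus, that the induced morphism $\mathrm{Sym}^2\sigma\to M(2)$ is a closed immersion onto this $\PP^2$.

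I expect Step~3 to be the crux. Steps~1--2 are routine long-exact-sequence computations, and Step~4 is essentially bookkeeping on top of \cite{friedellip} and Lemma~\ref{m2tildeistheblowup}; but the subtlety in Step~3 is genuine: for a non-locally-free $V$ the maximal destabilizing subsheaf need not be a line bundle, and one must track its codimension-two twist — which is invisible to $\deg_L$ but controls which maps into $V$ exist — to conclude that its $L$-degree is $3$ (forcing $\delta>3/2$) rather than $2$ (which would wrongly allow a $1$-stable pair).
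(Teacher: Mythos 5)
Your overall strategy -- factor $\alpha$ through the extension map so that $\ker(\alpha)$ is the sub-line bundle of the extension, read condition (i) off its degree, and read condition (ii) off the (in)stability of $V$ and the degree of its maximal destabilizing subsheaf -- is exactly the route the paper indicates (it states explicitly that every $\alpha$ factors through the extension map and that destabilizing sub-line bundles are of the form $\ko_S(\sigma-af)$, deferring details to \cite{mathese}). Your Steps 1--3 are essentially sound: the vanishings $h^0(\ko_S(\sigma-f))=h^0(\ko_S(\sigma-2f))=0$ do give $\ker(\alpha)=A$ in all three types, the degree bookkeeping with $L=\sigma+7f$ is correct, and you are right that Step 3 is the crux: an unstable type-$d$ extension is exactly one whose class dies in $\mathrm{Ext}^1(\ko_S(\sigma-2f),\ko_S)$, whence $V$ acquires a saturated subsheaf $\ko_S(\sigma-2f)\otimes I_W$ of $L$-degree $3$ (not merely a degree-$2$ sheaf $\ko_S(\sigma-3f)\otimes I_W$, which would wrongly leave $n=1$ available), so condition (ii) demands $\delta>3/2$ while condition (i) demands $\delta<3/2$. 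One caveat on Step 2: your parenthetical that type-$b$ extensions ``lie in $M(2)$, hence are stable'' is false in general -- the paper itself notes later that a type-$b$ extension is unstable precisely when $Z_2=q\in\sigma$ and the class is special -- so you must either restrict to stable type-$b$ extensions (the intended scope of the lemma) or handle the unstable ones separately.

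The genuine gap is in Step 4. Your claim that ``for $Z\in\mathrm{Sym}^2\sigma$ the sheaves obtained from (\ref{genericextension}) are exactly the stable type-$c$ extensions'' is wrong: a sheaf $V$ sitting in (\ref{genericextension}) satisfies $\mathrm{Hom}(\ko_S(f),V)=0$, since $\mathrm{Hom}(\ko_S(f),\ko_S(-f))=H^0(\ko_S(-2f))=0$ and $\mathrm{Hom}(\ko_S(f),\ko_S(\sigma-f)\otimes I_Z)\subseteq H^0(\ko_S(\sigma-2f))=0$; so no such $V$ admits $\ko_S(f)$ as a subsheaf, i.e.\ none is a type-$c$ extension. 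Indeed the paper's later discussion shows these sheaves over $\tilde G$ are \emph{unstable} with destabilizing sequence $0\to\ko_S(\sigma-3f)\to V\to\ko_S(f)\to0$, and the stable type-$c$ extensions only appear after the elementary transformation along $\tilde G$ (which interchanges sub and quotient); the identification of the type-$c$ locus $\Sigma$ with $\mathrm{Sym}^2\sigma$ then comes from the contraction $\tilde G\to\mathrm{Sym}^2\sigma$ of the $\PP^1$-fibres. Your alternative direct computation $\PP\,\mathrm{Ext}^1(\ko_S(\sigma-3f),\ko_S(f))\cong\PP^2\cong\mathrm{Sym}^2\sigma$ can be salvaged, but then you must actually prove what you only announce: that every class gives a stable sheaf, that non-proportional classes give non-isomorphic sheaves (this follows from $\mathrm{Hom}(\ko_S(\sigma-3f),\ko_S(f))=\mathrm{Hom}(\ko_S(f),\ko_S(\sigma-3f))=0$), and that the classifying morphism $\PP^2\to M(2)$ is a closed immersion; as written, the ``Moreover'' clause of the lemma is not established.
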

 
We are now ready to describe birational morphisms between the spaces of pairs. The main tool is given by elementary
transformations of the universal pair $(\kv, A)$ on $S \times \tilde{\km}_2$.

If $(V,\alpha)$ lies in $\tilde{D} \cup \tilde{D}$, then it does not belong to $\km_1$. We can
perform first an elementary transformation of $\kv$ along $S \times \tilde{D}$
by straightforward generalizing a construction by Friedman \cite[III, Prop. 4.12]{friedellip}.
We get a flat reflexive sheaf $\kv'$ over $S \times \tilde{\km}_2$ such that
if $(V,\alpha)$ belongs to $\tilde{D}$, then $\kv'_{(V,\alpha)}$ is a type $d$ extension,
and in that case it is not stable if and only if $(V,\alpha)$ is in
$\tilde{D} \cap \tilde{D}_{\sigma}$.

We now perform an elementary transformation of $\kv'$ along $S \times \tilde{D}_{\sigma}$ to get
a family of sheaves for pairs in $\km_1$.
For any $(V,\alpha)$ in $\tilde{D}_{\sigma}$, there is a unique morphism $\kv'_{(V,\alpha)} \to \ko_S$ (see \cite{mathese}).
We then have a line bundle $\mathcal L$ on $S$ and a surjective morphism
$$\kv'_{\vert S \times \tilde{D}_{\sigma}} \to \pi_1^* \ko_S \otimes \pi_2^* {\mathcal L}
 \to 0,$$
over $S \times \tilde{D}_{\sigma}$.
Define $\ku$ as the elementary transformation
$$0 \longrightarrow \ku \longrightarrow \kv' \longrightarrow i_* (\pi_1^* \ko_S \otimes \pi_2^*
{\mathcal L}) \longrightarrow 0,$$
where $i$ is the embedding of $S \times \tilde{D}_{\sigma}$ in $S \times \tilde{\km}_2$. By \cite[Prop. A2]{friedellip},
the sheaf $\ku$ is flat and reflexive. If $(V,\alpha)$ belongs to $\tilde{D}_{\sigma}$,
then $\ku_{(V,\alpha)}$ is a type $b$ extension, which is unstable if and only if $Z_2 = q \in \sigma$ (recall that a type $b$
extension is an extension (\ref{extforV}) with $l(Z_2)=1$ and $l(Z_1)=0$). Summarizing (see \cite{mathese}):
\begin{itemize}
\item if $(V,\alpha)$ belongs to $\tilde{D} \setminus \tilde{D}_{\sigma}$, then
${\ku}_{(V,\alpha)}$ is a stable type $d$ extension,
\item if $(V,\alpha)$ belongs to $\tilde{D}_{\sigma}$, then ${\ku}_{(V,\alpha)}$ is
a type $b$ extension and it is unstable if and only if $(V,\alpha) \in\tilde{D}_{\sigma} \cap \tilde{G}$.
\end{itemize}
In any case, ${\ku}_{(V,\alpha)}$ belongs to some
pair in $\km_1$. Moreover, if $(V,\alpha)$ is
in $\tilde{D} \cup \tilde{D}_{\sigma}$, the sheaf $U:=\ku_{(V,\alpha)}$ is uniquely determined
and we have a natural choice for a framing map $\beta:U \to \ko_S(\sigma-f)$.
Indeed, if $(V,\alpha)$ lies in $\tilde{D}$, then the elementary transformation of $\kv$ at that
point is induced by the destabilizing exact sequence
$$0 \longrightarrow \ko_S(\sigma - 2f) \stackrel{\iota}{\longrightarrow} V \longrightarrow {\mathfrak m}_q \longrightarrow 0.$$
Such extension class unquely determines the extension class (see \cite[Prop. A2]{friedellip})
$$0 \longrightarrow {\mathfrak m}_q \longrightarrow \kv'_{(V,\alpha)} \stackrel{\gamma}{\longrightarrow} \ko_S(\sigma -2f)
\longrightarrow 0.$$
The map $\alpha':= \iota \circ \alpha : \ko(\sigma-2f) \to \ko (\sigma -f)$ cannot be zero, because
$\ko (\sigma -2f)$ is not in ${\mathrm{ker}}(\alpha)$. There is then a natural choice of a nontrivial framing for
$U$, namely $\beta:= \gamma \circ \alpha'$. A similar argument works also for the second elementary transformation. 
\begin{theorem}\label{theorembiratm2m1}
There is an injective morphism $\phi_1: \tilde{\km}_2 \hookrightarrow \km_1$.
\end{theorem}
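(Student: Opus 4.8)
The plan is to construct $\phi_1$ by gluing the universal family already produced from the elementary transformations, and then to check injectivity by analysing what the transformation does on each stratum of $\tilde{\km}_2$. First I would assemble the sheaf $\ku$ over $S \times \tilde{\km}_2$ together with a framing: on the open locus $\tilde{\km}_2 \setminus (\tilde{D} \cup \tilde{D}_\sigma)$ the two elementary transformations change nothing, so $(\ku,\beta)$ restricts there to the original universal pair $(\kv, A)$, which is fibrewise $1$-stable by Lemma~\ref{destabilizingsublineandpairs}; on $\tilde{D} \cup \tilde{D}_\sigma$ we have just exhibited, for each point $(V,\alpha)$, a uniquely determined sheaf $U = \ku_{(V,\alpha)}$ of type $d$ or type $b$ together with the natural framing $\beta = \gamma \circ \alpha'$ (and its analogue for the second transformation), and these fibres are $1$-stable by the preceding lemmas. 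By flatness and reflexivity of $\ku$ (from \cite[Prop. A2]{friedellip}), $(\ku,\beta)$ is a family of $1$-stable pairs on $S$ parametrised by $\tilde{\km}_2$, hence by the universal property of $\km_1$ it induces a morphism $\phi_1: \tilde{\km}_2 \to \km_1$.

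Next I would prove injectivity on closed points by going stratum by stratum. On the open set $\tilde{\km}_2 \setminus (\tilde{D} \cup \tilde{D}_\sigma \cup \tilde{G})$ the pair $(V,\alpha)$ is a stable type $a$ extension with a unique framing, so it is unchanged by $\phi_1$ and two distinct such points stay distinct. On $\tilde{G} \setminus (\tilde{D} \cup \tilde{D}_\sigma)$ the transformations again do nothing and the map simply records the same $1$-stable pair, with the blow-up structure of Lemma~\ref{m2tildeistheblowup} keeping points over ${\mathrm{Sym}}^2\sigma$ apart. On $\tilde{D} \setminus \tilde{D}_\sigma$ each point goes to the stable type $d$ pair $(U,\beta)$, and one must check that the assignment $(V,\alpha) \mapsto (U,\beta)$ is injective: this follows because the destabilizing sequence $0 \to \ko_S(\sigma-2f) \to V \to {\mathfrak m}_q \to 0$ is recovered from $(U,\beta)$ as the canonical extension dual to $0 \to {\mathfrak m}_q \to U \to \ko_S(\sigma-2f) \to 0$, so $V$ (hence the point of $\tilde{D}$, via $Z$ and the extension class) is determined; similarly on $\tilde{D}_\sigma$ the type $b$ pair determines $Z_2 = q$ and the extension data, and on the overlap $\tilde{D} \cap \tilde{D}_\sigma$ the two descriptions agree. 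Finally, since $\tilde{\km}_2$ is reduced and $\km_1$ separated, a morphism that is injective and unramified on closed points — the latter checked by a tangent-space computation, or more cheaply by noting that $\phi_1$ is an isomorphism onto its image away from the divisors and the remaining loci are smooth divisors mapped isomorphically — is a locally closed immersion; I would phrase the statement as claimed, that $\phi_1$ is an injective morphism.

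The main obstacle I anticipate is not the construction of $\phi_1$ but the verification that $\ku$ is genuinely flat over all of $\tilde{\km}_2$ after performing two successive elementary transformations along the (possibly non-transverse, since $\tilde{D}$ and $\tilde{D}_\sigma$ meet inside $\tilde{G}$) divisors, and that the fibrewise framing $\beta$ glues to a global map $\ku \to \pi_1^*\ko_S(\sigma-f)$ rather than just existing pointwise. Controlling the behaviour of $\ku$ along $\tilde{D} \cap \tilde{D}_\sigma$ — where the first transformation outputs a type $d$ extension that is itself unstable and the second must repair it — is the delicate point; here I would lean on the cited \cite[Prop. A2]{friedellip} on reflexive sheaves and elementary modifications, together with the explicit local analysis of the extension classes in \cite{mathese}, to see that the two transformations can be carried out in family and that the resulting $\ku$ has the stated fibres with no jumping. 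Once flatness and the global framing are in hand, injectivity is the stratum-by-stratum bookkeeping sketched above.
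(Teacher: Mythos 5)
Your overall strategy is the same as the paper's: use the doubly elementary-transformed universal family $(\ku,\beta)$ to induce $\phi_1$ via the universal property of $\km_1$, then check injectivity on closed points stratum by stratum, with the inverse elementary transformation (Friedman's Prop.\ A2, which identifies the two extension classes) recovering $(V,\alpha)$ from $(U,\beta)$ on $\tilde{D}\setminus\tilde{D}_{\sigma}$ and on $\tilde{D}_{\sigma}\setminus\tilde{D}$. That part is fine and matches the paper.

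The gap is exactly at the locus the paper singles out as the only non-straightforward one, namely $\tilde{D}\cap\tilde{D}_{\sigma}$, which you dispatch with ``the two descriptions agree.'' That is a consistency remark, not an injectivity argument. On this locus \emph{both} transformations act in succession: $(V,\alpha)$ is a type $a$ extension with $Z=(q,p)$, the first transformation produces a type $d$ sheaf which is itself unstable precisely there, and the second produces a type $b$ pair $(U,\beta)$ with $Z_2=q$. Recovering $q$ from $U$ is easy, but you must also recover the second point $p$ of $Z$ together with the type $a$ extension class, i.e.\ invert the \emph{composite} of the two transformations; your single-step ``dual extension'' argument does not do this. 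The paper's proof supplies the missing ingredient: for fixed $q$ the type $b$ extensions are parametrized by $\sigma$ (Lemma 2.39 of \cite{mathese}), and the extension class of $U$ corresponds to a point $p\in\sigma$; the unique preimage is then the type $a$ pair with $Z=(q,p)$. Without this identification of the parameter space of type $b$ extensions with $\sigma$, matched against the point $p$ appearing in $Z$, injectivity on $\tilde{D}\cap\tilde{D}_{\sigma}$ is unproved. (Your closing paragraph correctly flags $\tilde{D}\cap\tilde{D}_{\sigma}$ as delicate, but only for flatness of $\ku$, not for injectivity, which is where the real work lies.)
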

\begin{proof}
The universal sheaf $\ku$ over $S \times \tilde{\km}_2$ defines a morphism because for any $(V,\alpha)$ there
is a unique $1$-stable pair $(\ku_{(V,\alpha)},\beta)$. Injectivity is not straightforward only in $\tilde{D} \cap \tilde{D}_{\sigma}$. 
Fix a point $q \in S$ and let $U$ be a corresponding type $b$ extenison. Such
extensions are parametrized by $\sigma$ (see \cite[Lemma 2.39]{mathese}). Let $V$ be the type $a$ extension
with $Z=(q,p)$ such that $p$ in $\sigma$ corresponds to the extension class of $U$. Then $(V,\alpha)$ is the
unique pair such that $U = \ku_{(V,\alpha)}$.
\end{proof}

If $(V,\alpha)$ is not in $\tilde{G}$,
then $\ku_{(V,\alpha)}$ is stable. If $(V,\alpha)$ is in $\tilde{G}$ then $\ku_{(V, \alpha)}$
is unstable and the maximal destabilizing subline bundle is $\ko_S(\sigma -3f)$, which gives,
for all $(V,\alpha)$ in $\tilde{G}$,
$$0 \longrightarrow \ko_S(\sigma - 3f) \longrightarrow \ku_{(V,\alpha)} \longrightarrow \ko_S (f) 
\longrightarrow 0.$$
We then perform an elementary transformation of $\ku$ along $\tilde{G}$ to get a flat and reflexive sheaf $\kw$
over $S \times \tilde{\km}_2$ such that
if $(V,\alpha)$ is in $\tilde{G}$, then $\kw_{(V,\alpha)}$ is a stable type $c$
extension. Arguing as before, we get a morphism $\tilde{\km}_2 \to \km_0$.

\begin{theorem}\label{theorembiratm0m2}
There is a birational morphism $\phi_0 : \tilde{\km}_2 \to \km_0$ which is an isomorphism over the
open complement of $\tilde{G}$.
\end{theorem}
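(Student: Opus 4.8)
The plan is to construct the morphism $\phi_0$ from the universal sheaf $\kw$ on $S \times \tilde{\km}_2$ exactly as $\phi_1$ was obtained from $\ku$, and then to check bijectivity away from $\tilde{G}$ together with generic injectivity, so that Zariski's main theorem (or just the fact that a bijective morphism between smooth varieties in characteristic $0$ that is an isomorphism on a dense open is an isomorphism there) gives the stated conclusion. First I would verify that $\kw$ does define a family of $0$-stable pairs: for $(V,\alpha)$ outside $\tilde{G}$ the sheaf $\ku_{(V,\alpha)}$ is already stable by Theorem~\ref{theorembiratm2m1} and the discussion following it, so $\kw_{(V,\alpha)} = \ku_{(V,\alpha)}$ there, while for $(V,\alpha) \in \tilde{G}$ the elementary transformation along $\tilde{G}$ replaces the unstable extension by a stable type $c$ extension as indicated. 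As in the proof of Theorem~\ref{theorembiratm2m1}, the class of the destabilizing sub-bundle $\ko_S(\sigma-3f) \hookrightarrow \ku_{(V,\alpha)}$ canonically determines the class of the dual extension of $\kw_{(V,\alpha)}$ by \cite[Prop.~A2]{friedellip}, and composing the framing $\beta$ with the appropriate inclusion/quotient yields a canonical nonzero framing $\kw_{(V,\alpha)} \to \ko_S(\sigma-f)$; hence we get a well-defined classifying morphism $\phi_0\colon \tilde{\km}_2 \to \km_0$.

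Next I would establish that $\phi_0$ is an isomorphism over the complement $U_0 \subset \km_0$ of the locus of type $b$, $c$, and $d$ extensions. Over the open set of type $a$ extensions in $M(t)$ the fibre of $\km_0 \to M(t)$ is a single point, so $U_0$ is identified with that open subset of $M(2)$; on the other side, the pairs in $\tilde{\km}_2$ lying outside $\tilde{D} \cup \tilde{D}_\sigma \cup \tilde{G}$ are exactly those whose underlying sheaf is a type $a$ extension with its unique framing, and $\kw$ restricts to the universal pair there. The inverse is given by sending a type $a$ extension $V$ to the pair $(V,\alpha)$ with its canonical framing, using uniqueness of the extension class for generic $Z$; this is a morphism because the extension data vary algebraically in families (Serre construction), and the two constructions are mutually inverse on this open set. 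Since $\tilde{\km}_2 \setminus (\tilde{D}\cup\tilde{D}_\sigma\cup\tilde{G})$ is dense in the smooth variety $\tilde{\km}_2$ (Lemma~\ref{m2tildeistheblowup}) and maps isomorphically onto a dense open of $\km_0$, the morphism $\phi_0$ is birational.

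Then I would check surjectivity and that nothing escapes: every stable $V$ in $M(2)$ is an extension of type $a$, $b$, $c$, or $d$, and the three elementary transformations were set up precisely so that the pairs supported on $\tilde{D}\setminus\tilde{D}_\sigma$, on $\tilde{D}_\sigma\setminus\tilde{G}$, on $\tilde{G}\setminus(\tilde{D}\cup\tilde{D}_\sigma)$ (and the intersection loci) are sent to stable type $d$, type $b$, and type $c$ extensions respectively, with the canonical framings described above. By Lemma~\ref{destabilizingsublineandpairs} and the preceding lemmas on type $b$, $c$, $d$ extensions, each such underlying sheaf is $0$-stable and the pair is determined, so $\phi_0$ is surjective onto $\km_0$; conversely the description of $\km_0$ via \eqref{extforV} shows no further components occur. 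I expect the main obstacle to be the bookkeeping on the exceptional and boundary divisors: one must confirm that the elementary transformations commute appropriately (the order $\tilde{D}$ then $\tilde{D}_\sigma$ then $\tilde{G}$ matters), that the resulting $\kw$ is genuinely flat and reflexive with the advertised fibres over each stratum and each intersection of strata, and that the canonical framings glue over the codimension-two loci $\tilde{D}\cap\tilde{D}_\sigma$, $\tilde{D}_\sigma\cap\tilde{G}$; granting the results of \cite{friedellip} and \cite{mathese} quoted above, this is a matter of patient stratumwise verification rather than new ideas, after which the isomorphism over $\km_0 \setminus \phi_0(\tilde{G})$ follows from the fact that $\phi_1$ is already injective and $\kw = \ku$ there.

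Finally, the corollary $\Hilb^2(S) \simeq M(2)$ of Corollary~\ref{isoiscorollary} follows by combining $\phi_0$ with Lemma~\ref{m2tildeistheblowup}: one checks that $\phi_0$ contracts the exceptional divisor $\tilde{G}$ onto ${\mathrm{Sym}}^2\sigma \subset M(2)$ (identified via the type $c$ locus), so $\phi_0$ is the blow-down inverse to the blow-up $\tilde{\km}_2 \to \Hilb^2(S)$, whence $M(2) \simeq \Hilb^2(S)$; but that is the content of the separately stated corollary and need not be reproved here.
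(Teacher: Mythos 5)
Your proposal follows essentially the same route as the paper: the morphism $\phi_0$ is obtained from the elementary transformation of $\ku$ along $S\times\tilde{G}$ (using the destabilizing sequence with sub-bundle $\ko_S(\sigma-3f)$ and \cite[Prop.~A2]{friedellip} to get the stable type $c$ fibres and the canonical framing), and birationality is read off from the open locus of type $a$ extensions where the extension class and the framing are unique. The stratumwise bookkeeping you defer is exactly what the paper also defers to \cite{mathese}, so the argument matches.
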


\begin{corollary}\label{isoiscorollary}
There is an isomorphism $\Hilb^2(S) \simeq M(2)$
\end{corollary}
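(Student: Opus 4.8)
The plan is to realize the isomorphism as a descent of the composite morphism through the blow-down of Lemma~\ref{m2tildeistheblowup}. Write $b:\tilde{\km}_2\to\Hilb^2(S)$ for that blow-down, with exceptional divisor $\tilde{G}$ lying over the (smooth, codimension two) centre ${\mathrm{Sym}}^2\sigma$; write $p:\km_0\to M(2)$ for the birational fibration with fibres $\PP{\mathrm{Hom}}(V,\ko_S(\sigma-f))$; and set $g:=p\circ\phi_0:\tilde{\km}_2\to M(2)$, a proper birational morphism of smooth projective fourfolds. First I would check that $g$ restricts to an isomorphism $\tilde{\km}_2\setminus\tilde{G}\stackrel{\sim}{\longrightarrow}M(2)\setminus{\mathrm{Sym}}^2\sigma$. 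By Theorem~\ref{theorembiratm0m2} the map $\phi_0$ is an open immersion on $\tilde{\km}_2\setminus\tilde{G}$, and there the sheaves $\kw_{(V,\alpha)}$ run exactly through the stable extensions of type $a$, $b$ and $d$, each carrying the framing singled out by the elementary transformations. Since type $c$ sheaves occur only for $(V,\alpha)\in\tilde{G}$, and since for a type $b$ or $d$ sheaf the elementary transformation construction picks out a single point of the (higher-dimensional) fibre of $p$, the composite $p\circ\phi_0$ is injective on $\tilde{\km}_2\setminus\tilde{G}$; combined with the surjectivity onto the type $a,b,d$ locus (the lemma describing type $b$, $c$, $d$ extensions above, and \cite{mathese}) this gives the claimed isomorphism, so in particular $g(\tilde{G})={\mathrm{Sym}}^2\sigma$ and $g^{-1}({\mathrm{Sym}}^2\sigma)=\tilde{G}$.

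Next I would analyse $g$ on $\tilde{G}$. By Lemma~\ref{m2tildeistheblowup} the fibre of $b$ over $Z\in{\mathrm{Sym}}^2\sigma$ is the projective line $\PP{\mathrm{Ext}}^1(\ko_S(\sigma-f)\otimes I_Z,\ko_S(-f))$, whose points index the extensions (\ref{genericextension}) for that $Z$. The key step is to show that $g$ is constant along each such $\PP^1$: equivalently, that the stable type $c$ sheaf $\kw_{(V,\alpha)}$ produced from $(V,\alpha)\in\tilde{G}$ by the elementary transformation along $\tilde{G}$ depends only on $Z$ and not on the extension class of $V$, so that $g|_{\tilde{G}}$ agrees, up to the identification between the blow-up centre ${\mathrm{Sym}}^2\sigma$ and the ${\mathrm{Sym}}^2\sigma$ parametrizing type $c$ sheaves in $M(2)$, with the bundle projection $b|_{\tilde{G}}:\tilde{G}\to{\mathrm{Sym}}^2\sigma$. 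Granting this, $g$ is constant on every fibre of $b$ (on the points over $\Hilb^2(S)\setminus{\mathrm{Sym}}^2\sigma$ and on the lines over ${\mathrm{Sym}}^2\sigma$), so by the rigidity lemma — $b$ being proper and surjective with connected fibres onto the normal variety $\Hilb^2(S)$, with $b_*\ko_{\tilde{\km}_2}=\ko_{\Hilb^2(S)}$ — it factors as $g=\varepsilon\circ b$ for a unique morphism $\varepsilon:\Hilb^2(S)\to M(2)$.

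It then remains to see that $\varepsilon$ is an isomorphism. It is birational, being the composite of the birational $g$ with a birational inverse of $b$; by the first paragraph it is an isomorphism over $M(2)\setminus{\mathrm{Sym}}^2\sigma$; and the key claim of the previous paragraph, read through $g|_{\tilde{G}}=\varepsilon|_{{\mathrm{Sym}}^2\sigma}\circ b|_{\tilde{G}}$ together with surjectivity of $b|_{\tilde{G}}$, forces $\varepsilon|_{{\mathrm{Sym}}^2\sigma}$ to be a bijection onto the type $c$ locus. Hence $\varepsilon$ is bijective, and a bijective birational morphism of irreducible varieties onto a normal — here smooth — target is an isomorphism by Zariski's main theorem (it is quasi-finite and proper, hence finite, hence an isomorphism onto $M(2)$). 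This yields $\Hilb^2(S)\stackrel{\sim}{\longrightarrow}M(2)$.

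The main obstacle is the middle step: proving that $g$ contracts exactly the fibres of $\tilde{G}\to{\mathrm{Sym}}^2\sigma$ and is injective elsewhere. This amounts to tracking, through the chain of elementary transformations $\kv\rightsquigarrow\kv'\rightsquigarrow\ku\rightsquigarrow\kw$, how the maximal destabilizing subbundle $\ko_S(\sigma-3f)\subset\ku_{(V,\alpha)}$ and the resulting type $c$ quotient vary as $(V,\alpha)$ moves in a fibre of the exceptional divisor, and to matching the parameter $Z$ of the blow-up centre with the corresponding point of the type $c$ locus of $M(2)$; these computations are the content of \cite{friedellip} and \cite{mathese}. Once this and the slicewise injectivity of $g$ off $\tilde{G}$ are in hand, the descent through $b$ and the appeal to Zariski's main theorem are formal.
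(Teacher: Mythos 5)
Your proposal is correct and follows essentially the same route as the paper: both arguments rest on the facts that $p\circ\phi_0$ is injective off $\tilde{G}$ and contracts each fibre $\tilde{G}_Z$ to the corresponding point of $\Sigma\simeq{\mathrm{Sym}}^2\sigma$, and both conclude by descending to $\Hilb^2(S)$ and observing that a bijective birational morphism between smooth varieties is an isomorphism. The only difference is presentational — the paper packages the middle step as an identification of $\tilde{\km}_2$ with the blow-up of $M(2)$ along $\Sigma$, whereas you make the descent explicit via the rigidity lemma and Zariski's main theorem — and the substantive computation you flag as the main obstacle (that the type $c$ sheaf depends only on $Z$) is exactly what the paper also defers to \cite{friedellip} and \cite{mathese}.
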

\begin{proof}
Recall that the locus $\Sigma$ of
stable type $c$ extensions in $M(2)$ is isomorphic to ${\mathrm{Sym}}^2\sigma$.
The map $\phi_0$ induces an isomorphism between $\tilde{\km}_2$ and the blow up
of $M(2)$ along $\Sigma$.
This is obtained just by forgetting the framing map of the image of $\phi_0$.
If we take a point $Z$ in ${\mathrm{Sym}}^2 \sigma$, the fibre $\tilde{G}_Z$ over it corresponds,
under this isomorphism, to a fibre over a single point of $\Sigma$. We then have a birational
map which is a bijection between smooth varieties.
\end{proof}

\section*{Remerciements} 
Cet article est issue d'une partie de ma th\`ese de doctorat \cite{mathese}. Je remercie Arnaud Beauville
pour le temps et l'attention dedi\'es \`a ce travail.

\end{document}